\documentclass[12pt]{article}
\usepackage{a4wide,comment,todonotes,mathrsfs,amsmath,amssymb,mathpazo}
\usepackage{tikz}
\usetikzlibrary{calc}

\usepackage[T1]{fontenc}
\usepackage{esint}

\newcommand{\Eqref}[1]{Eq.\,(\ref{#1})}

\usepackage{multirow}
\usepackage{tabulary}

\newtheorem{thm}{Theorem}

\newtheorem{exl}[thm]{Example}

\newenvironment{proof}{\vspace{-0.1cm}\noindent\textbf{Proof:}}{$\blacksquare$\\}

\newcommand{\ep}{\epsilon}

\newcommand{\NN}{\mathbb{N}}
\newcommand{\RR}{\mathbb{R}}
\newcommand{\PP}{\mathbb{P}}
\newcommand{\EE}{\mathbb{E}}

\newcommand{\MDP}{{\small MDP}}
\newcommand{\MDPs}{{\small MDP}s}

\begin{document}
\tikzstyle{state}=[circle,thick,draw=black!80]
\tikzstyle{term}=[rectangle,thick,draw=black!80]
\tikzstyle{andsoon}=[rectangle,thick,draw=white!80,fill=white!20]
\tikzstyle{andsoonfill}=[rectangle,rounded corners, text width=8em, text centered,thick,draw=black!80,fill=black!10]
\tikzstyle{nameex}=[rectangle,thick]

\title{Optimal strategies in Markov decision processes with finitely additive evaluations\footnote{We would like to thank Fabio Maccheroni for a very useful discussion.}}
\author{J\'{a}nos Flesch\footnote{Department of Quantitative Economics, Maastricht University, P.O.Box 616, 6200 MD, The Netherlands. Email: jflesch@maastrichtuniversity.nl}\ \and Arkadi Predtetchinski\footnote{Department of Microeconomics and Public Economics, Maastricht University, P.O.Box 616, 6200 MD, The Netherlands. Email: a.predtetchinski@maastrichtuniversity.nl}\ \and William Sudderth\footnote{School of Statistics, University of Minnesota, Minneapolis, MN 55455, United States. Email: bill@stat.umn.edu}\ \and Xavier Venel\footnote{Dipartimento di AI, Data and Decision Sciences, Luiss University, Viale Romania 32, 00197 Rome, Italy. Email: xvenel@luiss.it}}
\maketitle

\begin{abstract}
We study infinite-horizon Markov decision processes (\MDPs) where the decision maker evaluates each of her strategies by aggregating the infinite stream of expected stage-rewards. The crucial feature of our approach is that the aggregation is performed by means of a given diffuse charge (a diffuse finitely additive probability measure) on the set of stages. The results of Neyman [2023] imply that in this setting, in every \MDP\ with finite state and action spaces, the decision maker has a pure optimal strategy as long as the diffuse charge satisfies the time value of money principle. His result raises the question of existence of an optimal strategy without additional assumptions on the aggregation charge. We answer this question in the negative with a counterexample. With a delicately constructed aggregation charge, the \MDP\ has no optimal strategy at all, neither pure nor randomized.
\end{abstract}
\noindent\textbf{Keywords: Markov decision process, finitely additive measure, optimal strategy.} 

\section{Introduction}
Markov decision processes (\MDP s) are one of the most frequently used models for describing and examining dynamic decision making. \MDP s are covered in a very extensive literature and they have been applied in various contexts (e.g.~Puterman [1990]). 

We study \MDP s with finite state and action spaces where the set of stages is $\NN=\{1,2,\ldots\}$. At each stage, the play of the \MDP\  is in one of the states, where the decision maker chooses one of the actions. Depending on this action, the decision maker receives an instantaneous reward, and the next state is chosen according to a probability distribution, where the play continues at the next stage.  

Each strategy of the decision maker defines an expected reward for every stage, and thus determines an infinite stream of expected rewards. We assume that the decision maker evaluates each strategy by aggregating the corresponding infinite stream of expected rewards into a real number, called payoff, by means of a diffuse charge, that is, a finitely additive probability measure on the set $\NN$ of stages such that each stage has weight zero. We will call this diffuse charge an \emph{aggregation charge}. 

A special case of this setup is the well-known class of \MDP s with the long-term average reward. This case is attained if the aggregation charge assigns the limiting frequency to each subset of the set of stages, whenever this limiting frequency exists (cf.~Example~\ref{ex-averagemdp}). 

Another important special case is the class of \MDP s where the aggregation charge satisfies the time value of money principle. This class is considered in Neyman [2023], although in his paper aggregation charges are formulated in terms of linear functionals (cf.~Section~\ref{sec-prel}). Neyman [2023] showed that, in every \MDP, the decision maker has a pure stationary strategy that is optimal for all aggregation charges that satisfy the time value of money principle (cf.~Theorem~\ref{theorem-Neyman}). 

When aggregation charges do not satisfy the time value of money principle, it can occur that the decision maker has a pure optimal strategy, yet none of the stationary strategies achieves optimality (cf.~Example \ref{ex-nostat}). Therefore, the question naturally arises whether, in each \MDP\ with each aggregation charge, the decision maker possesses an optimal strategy. In this paper, we answer this question in the negative (cf.~Theorem~\ref{theorem-counter}) by means of a counterexample. We present an \MDP\ with deterministic transitions and a delicately constructed diffuse charge such that the decision maker has no optimal strategy at all, neither pure nor randomized.

The paper is structured as follows. In Section \ref{sec-prel}, we 
discuss some preliminaries. In Section \ref{sec-mod}, we define the model. In Section \ref{sec-opt}, we recall the result of Neyman [2023], and present our main result, Theorem \ref{theorem-counter}. Finally, in Section \ref{sec-concluding}, we make a few concluding remarks.

\section{Preliminaries}\label{sec-prel}
In the entire paper, we assume the axioms of ZFC. Let $\NN=\{1,2,\ldots\}$ denote the set of natural numbers.\medskip

\noindent\textbf{Charges.} A \emph{charge}, also called finitely additive probability measure, on $\NN$ is a mapping $\mu:2^\NN\to[0,1]$ such that $\mu(\emptyset)=0$, $\mu(\NN)=1$, and $\mu$ satisfies finite additivity: for all disjoint $M_1\subseteq \NN$ and $M_2\subseteq \NN$ we have $\mu(M_1\cup M_2)=\mu(M_1)+\mu(M_2)$. Note that a charge $\mu$ is defined  on the entire power set $2^\NN$, which is without loss of generality, as every charge defined on an algebra of $\NN$ has an extension (generally not unique) to the entire power set. We denote by $\Delta^f$ the set of charges on $\NN$. For a detailed discussion of charges, we refer to Rao and Rao [1983]. 

A charge $\mu\in \Delta^f$ is called \emph{diffuse} if\footnote{With a slight abuse of notation, we denote by $\mu(n)$ the $\mu$-measure of the singleton $\{n\}$.} $\mu(n)=0$ for every $n\in\NN$. We denote by $\Delta^{d}$ the set of diffuse charges on $\NN$. The set $\Delta^{d}$ is a strict subset of $\Delta^f$. 

A diffuse charge $\mu\in\Delta^d$ is called a \emph{frequency charge} if
\begin{equation}\label{def-freq}
\mu(M)\,=\,\lim_{n\to\infty}\,\frac{|\{1,\ldots,n\}\cap M|}{n}
\end{equation}
for each set $M\subseteq \NN$ for which the limit in the righthandside exists. Moreover, a diffuse charge $\mu\in\Delta^d$ is called \emph{translation invariant} if $\mu(M)=\mu(M+1)$ for all $M\subseteq \NN$, where $M+1 = \{n+1: n \in M\}$. Frequency charges exist (cf.~Theorem 3 in Kadane and O'Hagan [1995]), and they form a strict subset of the collection of translation invariant charges (cf.~Proposition 3.3 and the non-trivial Example 4.8 in Schirokauer and Kadane [2007]).\medskip

\noindent\textbf{Integration with respect to a charge.} Consider a charge $\mu\in\Delta^f$. A function $s \colon \NN \rightarrow \RR$ is called a simple function if, for some $m\in\NN$, there are $c_1, \ldots, c_m \in \RR$ and a partition $\{M_1, \ldots, M_m\}$ of $\NN$ such that $s  = \sum_{i=1}^m c_i \mathbb{I}_{M_i}$, where $\mathbb{I}_{M_i}$ is the characteristic function of the set $M_i$. With respect to $\mu$, the integral of $s$ is defined by $\int_{t\in\NN} s(t) \mu(dt) = \sum_{i=1}^m c_i \cdot \mu(M_i)$.
\medskip

For a bounded function $f \colon \NN \rightarrow \RR$, the integral $\int_{t\in\NN} f(t) \mu(dt)$ is defined as the supremum of all real numbers $\int_{t\in\NN} s(t) \mu(dt)$, where $s$ is a simple function and $s \le f$. This is equivalent to taking the infimum of those real numbers, where $s$ is a simple function and $s \ge f$. Since $f$ is bounded, the integral is finite.\medskip

\noindent\textbf{Charges satisfying the time value of money principle.} We will now lay the connection with the setup in Neyman [2023]. As we explain below, there is a natural one-to-one correspondence between patient valuations as defined in Neyman [2023] and the class\footnote{This class of charges is a strict subset of the class of frequency charges. Indeed, the inclusion follows by \Eqref{eq-betwN}, and the strictness of the inclusion is demonstrated by Example 4.5 in Schirokauer and Kadane [2007].} of diffuse charges $\mu\in\Delta^d$ satisfying  
\begin{equation}\label{eq-betwN}
\liminf_{n\to\infty}\,\frac{|\{1,\ldots,n\}\cap M|}{n}\,\leq\,\mu(M)\,\leq\,\limsup_{n\to\infty}\,\frac{|\{1,\ldots,n\}\cap M|}{n}
\end{equation}
for each $M\subseteq \NN$. 

Neyman [2023, Definition 3] defines the notion of a \emph{valuation} as a function $w:\ell^\infty\to\RR$, where $\ell^\infty$ is the set of all bounded infinite sequences in $\RR$, such that: (i) $w$ is normalized: $w(1,1,\ldots)=1$, (ii) $w$ is additive: $w(g+g')=w(g)+w(g')$ for all $g,g'\in\ell^\infty$, and (iii) $w$ obeys the time value of money principle: for all $g=(g_1,g_2,\ldots)\in\ell^\infty$ and $g'=(g'_1,g'_2,\ldots)\in\ell^\infty$, if the partial sums satisfy $g_1+\cdots+g_n\geq g'_1+\cdots+g'_n$ for each $n\in\NN$, then $w(g)\geq w(g')$. 

A valuation $w:\ell^\infty\to\RR$ is called \emph{patient} in Neyman [2023, Definition 6] if $w(g)=1$ for every sequence $g\in\ell^\infty$ such that, for some $k\in\NN$, we have $g_n=0$ for all $n\leq k$ and $g_n=1$ for all $n>k$. By the characterization in Theorem 2 in Neyman [2023], a function $w:\ell^\infty\to\RR$ is a patient valuation if and only if $w$ is linear and satisfying
\begin{equation}\label{eq-n2}
\liminf_{n\to\infty}\,\frac{g_1+\cdots+g_n}{n}\,\leq\,w(g)\,\leq\,\limsup_{n\to\infty}\,\frac{g_1+\cdots+g_n}{n}
\end{equation}
for every $g=(g_1,g_2,\ldots)\in \ell^\infty$. It follows that every patient valuation is continuous with respect to the $||\cdot||_\infty$-norm. 

For every patient valuation $w:\ell^\infty\to\RR$, since $w$ is linear and continuous with respect to the $||\cdot||_\infty$-norm, there is a unique diffuse charge $\mu\in\Delta^d$ such that \begin{equation}\label{eq-corrcharge}
w(g)\,=\,\int_{n\in\NN} g_n\; \mu(dn)
\end{equation}
for all $g\in\ell^\infty$; see for instance\footnote{This theorem applies, using the notation in Rao and Rao [1983], as follows: $\Omega:=\NN$, $\mathscr{F}:=2^\NN$, and $T:=w$. Then, in view of Definition 4.7.1 in Rao and Rao [1983], the space $\mathscr{C}(\Omega,\mathscr{F})$ equals $\ell^\infty$, because $\mathscr{F}$ contains all subsets of $\Omega$. Hence, the equation in Theorem 4.7.4 in Rao and Rao [1983] implies that \Eqref{eq-corrcharge} holds for a unique bounded  finitely additive signed-measure $\mu$. As $T(g)=w(g)\geq 0$ if $g_n\geq 0$ for all $n\in\NN$, the last claim of Theorem 4.7.4 implies that $\mu$ only takes nonnegative values. As $T(1,1,\ldots)=w(1,1,\ldots)=1$ by property (i) of a valuation, we obtain $\mu(\NN)=1$. Finally, $\mu$ is diffuse because $T=w$ is patient.} Theorem 4.7.4 in Rao and Rao [1983]. Note that this charge $\mu$ satisfies \Eqref{eq-betwN} due to \Eqref{eq-n2}. 

In conclusion, each patient valuation $w$ can be written as an integral in \Eqref{eq-corrcharge} with respect to a diffuse charge $\mu\in\Delta^d$ satisfying \Eqref{eq-betwN}. The converse is also true: if $\mu\in\Delta^d$ satisfies \Eqref{eq-betwN}, then $w$ given by \Eqref{eq-corrcharge} is a patient valuation.\medskip

\noindent\textbf{The topology of pointwise convergence on the set of charges.} We will consider a specific topology on the set $\Delta^f$ of charges: the \emph{topology of pointwise convergence}. This is the smallest topology\footnote{This topology is the same as the topology that $\Delta^f$ inherits as a closed subspace of the compact product space $[0,1]^{(2^\NN)}$.} on $\Delta^f$ under which, for each $M\subseteq \NN$, the mapping $\phi_M:\Delta^f\to[0,1]$ defined by $\phi_M(\mu)=\mu(M)$ is continuous. This topological space is compact and Hausdorff, but not metrizable. 

A net in $\Delta^f$ is a collection $\{\mu_\alpha\}_{\alpha\in \Omega}$, where $(\Omega,\succeq)$ is a directed set and $\mu_\alpha\in \Delta^f$ for every $\alpha\in\Omega$. We will be mostly interested in sequences in $\Delta^f$, a special case of nets in $\Delta^f$ where the directed set is $(\NN,\geq)$.

Consider a sequence $\{\mu_n\}_{n\in\NN}$ in $\Delta^f$. A charge $\mu\in \Delta^f$ is an \emph{accumulation point} of $\{\mu_n\}_{n\in\NN}$ if for every finite collection $\{M_1,\ldots,M_k\}$ of subsets of $\NN$ and for every $\ep>0$ there exists a sequence $m_1<m_2<\cdots$ such that 
\begin{equation*}
\forall n\in\NN,\ \forall i=1,\ldots,k:\hspace{0.5cm}|\mu(M_i)-\mu_{m_n}(M_i)|\;\leq\;\ep.
\end{equation*}
Equivalently, if the sequence $\{\mu_n\}_{n\in\NN}$ has a subnet converging to $\mu$. As $\Delta^f$ is compact, each sequence $\{\mu_n\}_{n\in\NN}$ in $\Delta^f$ has at least one accumulation point. \medskip

%\begin{exl}\rm 
%Positive MDPs. Take the set $X$ of the theorem to be the space $Z$ of plays for the MDP and let
%$f(z,n) = \sum_{t=1}^n r_t(z)$. Also let the measure $\nu$ of the theorem be 
%the distribution on $Z$ determined by the strategy $\sigma$. Then I think 
%that the theorem applies to show that expressions 1 and 2 of section 8 are 
%equal, and both are equal to $\int_X f(x,\infty)\,d\nu(x) 
%=\mathbb{E}_{\sigma}[\sum_{t=1}^{\infty} r_t]$. In particular, this quantity 
%does not depend on the diffuse measure $\mu$.
%\end{exl}
%
%If the measure $\mu$ is countably additive, the equality of expressions 1 and 2 of section 8 follows from the usual Fubini theorem. The equality for a 
%general finitely additive $\mu$ follows because it can be written as a convex 
%combination of a diffuse measure and a countably additive measure.

\section{The model}\label{sec-mod}

\subsection{Markov decision processes}\label{secsub-mdpmodel}

\textbf{The definition of a Markov decision process (\MDP).} An \MDP\ is given by:
\begin{itemize}
    \item A nonempty finite state space $S$, with a given state $s^*\in S$ as the initial state.
    \item A nonempty finite action space $A(s)$ for each state $s\in S$. 
    \item A bounded reward $r(s,a)\in\RR$ for each state $s\in S$ and action $a\in A(s)$.
    \item A probability measure $p(s,a)=(p(z|s,a))_{z\in S}$ on the state space $S$, for each state $s\in S$ and action $a\in A(s)$.
\end{itemize}

An \MDP\ is played on the infinite horizon, at stages in $\mathbb{N}$ as follows: At stage 1, in the given initial state $s_1 = s^*$, the decision maker chooses an action $a_1\in A(s_1)$. Then, she receives reward $r(s_1,a_1)$, and subsequently transition occurs to a state $s_2\in S$ according to the probability measure $p(s_1,a_1)$. At stage 2, in state $s_2$, the decision maker again chooses an action $a_2\in A(s_2)$. Then, she receives reward $r(s_2,a_2)$, and subsequently transition occurs to a state $s_3\in S$ according to the probability measure $p(s_2,a_2)$, and so on.

The transitions in the \MDP\ are said to be \emph{deterministic} if for each state $s\in S$ and each action $a\in A(s)$ there is a state $s'\in S$ such that $p(s'|s,a)=1$.\medskip

\noindent\textbf{Strategies.} A \emph{history} at stage $t\in\mathbb{N}$ is a sequence $(s_1=s^*,a_1,\ldots,s_{t-1},a_{t-1},s_t)$ such that $s_i \in S$ for all $i=1,\ldots,t$, and $a_i\in A(s_i)$ for all $i=1,\ldots,t-1$. 

A (behavioral) \emph{strategy} $\sigma$ is a map that, to each history $h=(s_1,a_1,\ldots,s_{t-1},a_{t-1},s_t)$ at any stage $t\in\NN$, assigns a probability measure $\sigma(h)$ on the action space $A(s_t)$. The interpretation is that $\sigma$ recommends to choose an action randomly according to $\sigma(h)$ if history $h$ arises. The set of all strategies is denoted by $\Sigma$. 

A strategy $\sigma$ is called \emph{pure} if $\sigma(h)$ assigns probability one to an action for any history~$h$; otherwise the strategy is called \emph{randomized}. A strategy $\sigma$ is called \emph{stationary} if $\sigma(h)$ only depends on the the history $h=(s_1,a_1,\ldots,s_{t-1},a_{t-1},s_t)$ through its current state $s_t$; that is, a stationary strategy can be seen as a collection $(\sigma_s)_{s\in S}$ where $\sigma_s$ is a probability measure on $A(s)$. 

\subsection{Aggregation charge}\label{mu-average}

We assume in this paper that the rewards that the decision maker receives during the \MDP\ are aggregated by a given diffuse charge $\mu\in\Delta^d$, which we also call the \emph{aggregation charge}. The \MDP\ supplemented with an aggregation charge $\mu\in\Delta^d$ is denoted by \MDP($\mu$).\medskip

\noindent\textbf{The payoff induced by a strategy.} Consider an \MDP$(\mu)$. The following evaluation was proposed by Neyman [2023], although in a somewhat different form (cf.~Section \ref{sec-prel}). 

It is natural to evaluate every strategy $\sigma$ by the \emph{payoff} defined as
\begin{equation}\label{eqpay}
u_\mu(\sigma)\;:=\;\int_{t\in\NN}{\EE_{\sigma}[r_t]}\;\mu(dt),
\end{equation}
where $r_t$ denotes the random variable for the reward at stage $t\in\NN$, and $\EE_\sigma$ denotes the expectation operator under $\sigma$. That is, the payoff $u_\mu(\sigma)$ is the $\mu$-weighted aggregation, or $\mu$-weighted average, of the expected rewards at all stages. Note that $u_\mu(\sigma)$ is well-defined for any strategy $\sigma$ and any aggregation charge $\mu$, because the sequence $\{\EE_{\sigma}[r_t]\}_{t\in\NN}$ is bounded for any strategy $\sigma$, and hence this sequence is integrable with respect to any charge $\mu$ (cf.~Section \ref{sec-prel}).

\begin{exl}\label{ex-averagemdp}\rm
If $\mu\in\Delta^d$ is a frequency charge, the payoff $u_{\mu}$ defined in \Eqref{eqpay} leads to an \MDP\ with the long-term average reward. For instance, if a strategy $\sigma$ induces an expected reward of 1 in each odd stage and an expected reward of 0 in each even stage, then in view of \Eqref{def-freq}, we have  $u_\mu(\sigma)=\mu(\{1,3,5,\ldots\})=\frac{1}{2}$. $\blacklozenge$
\end{exl}

\noindent\textbf{The value and optimal strategies.} With respect to the payoff in \Eqref{eqpay}, the \emph{value} of the \MDP($\mu$) is defined as \[v_\mu\,:=\,\sup_{\sigma\in\Sigma}\,u_\mu(\sigma).\]
A strategy $\sigma\in\Sigma$ is called \emph{optimal} if $u_\mu(\sigma)= v_\mu$.

\section{Non-existence of optimal strategies}\label{sec-opt}
Our starting point is the following result, which follows from Theorem 6 in Neyman [2023] when applied to diffuse charges. 

\begin{thm}[Neyman {[}2023, Theorem 6{]}]\label{theorem-Neyman}
Consider an \MDP\ with finite state and action spaces. Then, the decision maker has\footnote{One can choose any Blackwell optimal strategy for $\sigma$, i.e., any pure stationary strategy that is optimal for all sufficiently small discount rates. (In the proof of Theorem 6 in Neyman [2023] and the notation there, $\omega_t=0$ for all stages $t$, and hence for any $\ep>0$, one can choose $t_\ep=1$, which implies that the constructed strategy $\tau$ coincides with a Blackwell optimal strategy.)} a pure stationary strategy $\sigma$ such that $\sigma$ is optimal in the \MDP$(\mu)$ for all aggregation charges $\mu\in\Delta^d$ satisfying \Eqref{eq-betwN}.
\end{thm}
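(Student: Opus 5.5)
Our plan is to take $\sigma$ to be a Blackwell optimal pure stationary strategy: a pure stationary strategy that is optimal in the $\lambda$-discounted \MDP\ for all discount rates $\lambda\in(0,\lambda_0)$. Such a strategy exists in every finite \MDP\ by Blackwell's classical theorem. We then show that $u_\mu(\sigma)\ge u_\mu(\tau)$ for every strategy $\tau\in\Sigma$ and every $\mu\in\Delta^d$ satisfying \Eqref{eq-betwN}. Since $\sigma$ does not depend on $\mu$, this gives the statement.

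\textbf{Step 1: evaluate $\sigma$.} Under $\sigma$ the state process is a finite Markov chain. Hence the Cesàro averages $\frac1n\sum_{t=1}^n\EE_\sigma[r_t]$ converge to the long-run average reward $g^*(s^*)$. By \Eqref{eq-n2}, the patient valuation $w$ associated with $\mu$ through \Eqref{eq-corrcharge} squeezes this sequence, so $u_\mu(\sigma)=g^*(s^*)$. Moreover, $g^*(s^*)$ is the optimal long-run average value: Blackwell optimality implies average optimality, and even $\lim_{\lambda\to0}\lambda v_\lambda(s^*)=g^*(s^*)$.

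\textbf{Step 2: bound an arbitrary strategy $\tau$.} By the upper bound in \Eqref{eq-betwN}, equivalently in \Eqref{eq-n2}, we have $u_\mu(\tau)\le\limsup_n\frac1n\sum_{t=1}^n\EE_\tau[r_t]$. It therefore suffices to show that for every history-dependent, possibly randomized $\tau$,
\[\limsup_{n\to\infty}\frac1n\sum_{t=1}^n\EE_\tau[r_t]\;\le\; g^*(s^*).\]
To prove this, we use the Blackwell/average-optimality equations. There exist a gain $g^*$ and a bias $h$ on $S$ with $g^*(s)\ge\sum_z p(z|s,a)g^*(z)$ and $g^*(s)+h(s)\ge r(s,a)+\sum_z p(z|s,a)h(z)$ whenever $a$ attains equality in the first inequality. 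A standard way to avoid the restriction to such $a$ is the following. Choose a constant $K$ large enough that $g^*(s)+h(s)+Kg^*(s)\ge r(s,a)+\sum_z p(z|s,a)(h+Kg^*)(z)$ for all $s$ and all $a$. This is possible because the set of actions is finite and $g^*(s)-\sum_z p(z|s,a)g^*(z)>0$ for the non-conserving actions.

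Next we sum along the play under $\tau$ and take expectations. The inequality for $g^*$ makes $g^*(s_t)$ a supermartingale, so $\EE_\tau[g^*(s_t)]\le g^*(s^*)$. The second inequality telescopes to
\[\textstyle\sum_{t=1}^n\EE_\tau[r_t]\le\sum_{t=1}^n\EE_\tau[g^*(s_t)]+\|h+Kg^*\|_\infty\cdot 2,\]
which is at most $n\,g^*(s^*)+C$. Dividing by $n$ yields the claim.

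\textbf{Main obstacle.} The delicate step is Step 2: the optimal average reward must dominate the upper Cesàro limit of \emph{every} history-dependent strategy, not just stationary ones, and in multichain \MDPs\ the optimality equations only hold in the nested (two-equation) form. If the $K$-trick above proves awkward, we would fall back on the discounted route instead. Blackwell optimality gives $\sum_t(1-\lambda)^{t-1}\lambda\EE_\tau[r_t]\le\lambda v_\lambda(s^*)\to g^*(s^*)$. We would then transfer this from Abel means to $\limsup$ of Cesàro means along a subsequence via a Tauberian-type argument for bounded sequences with controlled increments. However, the direct supermartingale argument above is what we would try first.
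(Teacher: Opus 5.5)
Your argument is correct, but it takes a genuinely different route from the paper. The paper gives no self-contained proof. It transfers Neyman's Theorem~6 to diffuse charges via the correspondence between patient valuations and charges satisfying \Eqref{eq-betwN}. It then observes in a footnote that, because $\omega_t=0$, the strategy constructed in Neyman's proof is Blackwell optimal.

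You instead use only the Ces\`aro sandwich \Eqref{eq-n2}, which reduces the problem to the classical limiting-average criterion. Under a pure stationary strategy the Ces\`aro averages converge, so its payoff equals its gain for every admissible $\mu$. For an arbitrary history-dependent randomized strategy, the multichain optimality equations (made unconditional through $h+Kg^*$) and the supermartingale property of $g^*(s_t)$ bound the upper Ces\`aro average by $g^*(s^*)$. Your telescoping estimate is correct.

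Your route gives a short argument resting only on classical finite-\MDP\ theory, without Neyman's machinery for general valuations obeying the time value of money principle. It also shows a bit more than stated: any pure stationary strategy whose gain at $s^*$ is optimal works simultaneously for all such $\mu$, so Blackwell optimality is more than you need. The citation, in turn, gives brevity and places the statement within Neyman's broader framework.

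Two small remarks. First, the ``equivalently'' in Step~2 is not a direct consequence of the set inequalities \Eqref{eq-betwN}. Bounds on each $\mu(M_i)$ do not by themselves bound $\sum_i c_i\mu(M_i)$ by the upper Ces\`aro average of $\sum_i c_i\mathbb{I}_{M_i}$. Like the paper, you are relying on the stated fact that every $\mu\in\Delta^d$ satisfying \Eqref{eq-betwN} induces a patient valuation, together with Neyman's Theorem~2, and you should say so explicitly.

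Second, the fallback you sketch would not work as stated. For a single bounded sequence, an upper bound on the Abel means gives no upper bound on the $\limsup$ of the Ces\`aro means; in general only $\limsup$ of Abel means $\le$ $\limsup$ of Ces\`aro means holds. That route would need additional \MDP\ structure, but your primary argument makes it unnecessary.
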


Note that the strategy $\sigma$ in Theorem \ref{theorem-Neyman} is robust: $\sigma$ does not depend on the choice of the aggregation charge $\mu$, as long as $\mu$ satisfies \Eqref{eq-betwN}. 

When allowing aggregation charges $\mu$ that do not satisfy \Eqref{eq-betwN}, it is not difficult to find \MDPs\ such that the decision maker has a pure optimal strategy in the \MDP$(\mu)$, but none of the stationary strategies is optimal; cf.~Example \ref{ex-nostat}. 

The question naturally arises whether, for each \MDP\ and each aggregation charge $\mu$, the decision maker has an optimal strategy in the \MDP$(\mu)$. This brings us to the main contribution of the paper.\medskip 

\noindent\textbf{The even--or--odd \MDP$(\mu)$ :} First we define the \MDP, then the aggregation charge $\mu\in\Delta^d$.

Consider the \MDP\ such that (i) the state space is $S=\{1,2,3\}$, (ii) there are two actions $T$ and $B$ in state 1 and only one action in states 2 and 3, (iii) in state 1 the reward is 1 for action $T$ and 0 for action $B$, in state 2 the reward is 0, and in state 3 the reward is 1, (iv) action $T$ leads to state 2, action $B$ leads to state 3, and from states 2 and 3 the transition is to state 1. The \MDP\ is represented in the figure below:
\vspace{0.4cm}
\\

\scalebox{0.95}{
\hspace{3cm}\begin{tikzpicture}[x=0.4cm,y=0.4cm]
\draw[thick] (-8.5,0) -- (-4.5,0);
\draw[thick] (-8.5,3) -- (-4.5,3);
\draw[thick] (-8.5,0) -- (-8.5,3);
\draw[thick] (-4.5,0) -- (-4.5,3);
\node at (-7.5,2) {$0$};
\node at (-5.5,1) {$\rightarrow$};
\node at (-6.5,-1) {state 2};

\node at (-1,4.5) {$T$};
\node at (-1,1.5) {$B$};
\draw[thick] (0,0) -- (4,0);
\draw[thick] (0,3) -- (4,3);
\draw[thick] (0,6) -- (4,6);
\draw[thick] (0,0) -- (0,6);
\draw[thick] (4,0) -- (4,6);
\node at (1,5) {1};
\node at (1,2) {0};
\node at (3,4) {$\leftarrow$};
\node at (3,1) {$\rightarrow$};
\node at (2,-1) {state 1};

\draw[thick] (8,0) -- (12,0);
\draw[thick] (8,3) -- (12,3);
\draw[thick] (8,0) -- (8,3);
\draw[thick] (12,0) -- (12,3);
\node at (9,2) {$1$};
\node at (11,1) {$\leftarrow$};
\node at (10,-1) {state 3};
\end{tikzpicture}
}
\vspace{0.2cm}

Suppose that the initial state is state 1. Essentially, the stages can be grouped in blocks of two: stages 1 and 2, stages 3 and 4, stages 5 and 6, etc. At every odd stage the decision-maker chooses the payoff for the current stage and the next one, where her options are only two: getting 1 now and 0 at the next stage, or vice versa, 0 now and 1 at the next stage. Note that the transitions are deterministic. 

Now we define the charge $\mu$. The construction of $\mu$ is quite intricate and the details are critical to our proof. 

Let $\phi\in\Delta^d$ be a frequency charge (cf.~Section \ref{sec-prel}); note that $\phi$ is then translation invariant. Let $E_{0} = \{1,3,5,\ldots\}$ and for each $n \in \NN$ let $E_{n} = \{k\cdot 2^n\colon k\in\NN\}$.  So, for example, $E_1=\{2,4,6,8,10,12,14,16,\ldots\}$, $E_2=\{4,8,12,16,\ldots\}$, $E_3=\{8,16,\ldots\}$; in general, for $n\geq 1$, the set $E_{n+1}$ consists of every second element of the set $E_n$. Thus, the sequence $E_1,E_2,E_3,\ldots$ is decreasing with respect to set-inclusion, and $E_1\cap E_2\cap E_3\cap\cdots=\emptyset$.

Define the diffuse charge $\mu_{0}\in\Delta^d$ by $\mu_0(W)=2\cdot \phi(W\cap E_0)$ for each set $W \subseteq \NN$. This is essentially the charge $\phi$ applied to the set of odd numbers; in particular, $\mu_{0}(E_{0}) = 2\cdot \phi(E_0)=1$. Similarly, for each $n \in \NN$ define the diffuse charge $\mu_{n}\in\Delta^d$ by $\mu_n(W)=2^n\cdot \phi(W\cap E_n)$ for each set $W \subseteq \NN$. The charge $\mu_{n}$ is thus the charge $\phi$ applied to the set $E_{n}$; in particular, $\mu_{n}(E_{n}) = 2^n\cdot \phi(E_n)=1$. 

We endow the set $\Delta^f$ of charges on $\NN$ with the topology of pointwise convergence (cf.~Section \ref{sec-prel}). Since $\Delta^f$ is a compact space, the sequence $\{\mu_{n}\}_{n=1}^\infty$, as a net, has an accumulation\footnote{One such accumulation point is the charge $\mu_*$ such that $\mu_*(W)=\int_{n\in\NN}\mu_n(W)\,\phi(dn)$ for all $W\subseteq \NN$.} point $\mu_{*}\in\Delta^f$. Because each element of this sequence is a diffuse charge, we have $\mu_*\in\Delta^d$.

It holds that $\mu_{*}(E_{n}) = 1$ for each $n \in \NN$. Indeed, whenever $m\geq n$, we have $E_n\cap E_m=E_m$ and hence $\mu_m(E_n)=2^m\cdot \phi(E_n\cap E_m)=2^m\cdot \phi(E_m)=1$. 

Now let the aggregation charge be $\mu = \tfrac{1}{2}\mu_{0}+\tfrac{1}{2}\mu_{*}$. Clearly, $\mu\in\Delta^d$, because $\mu_0\in\Delta^d$ and $\mu_*\in\Delta^d$.\medskip

\noindent\textbf{Intuition:} The idea of this \MDP\ is as follows. As mentioned above, in each odd stage, the decision maker has to choose between action $T$ (giving reward 1 now and 0 at the next stage) and action $B$ (giving reward 0 now and 1 at the next stage). 

There is a tension between playing well for the charge $\mu_{0}$ and for the charge $\mu_{*}$. The charge $\mu_0$ is concentrated on the set of odd stages, while $\mu_{*}$ on the set of even stages. To play well for $\mu_0$, the decision maker needs to choose action $T$ as frequently as possible. In contrast, to play well for the charge $\mu_*$, the decision maker only needs to choose action $B$ with a positive frequency. Indeed, any of the sets $E_n$, for $n\geq 1$, has frequency $\frac{1}{2^n}$ but $\mu_*$-measure 1.

To perform optimally for $\mu$, the decision maker must perform well with respect to its both components, $\mu_{0}$ and $\mu_{*}$. For any $\ep\in(0,1)$, playing action $T$ with frequency $1-\ep$ gives a payoff of $1-\ep$ for $\mu_{0}$, a payoff of 1 for $\mu_{*}$, and hence a payoff of $1-\frac{1}{2}\ep$ for $\mu$. Yet, playing action $T$ with frequency 1 ignores $\mu_{*}$ and gives a payoff of only $\frac{1}{2}$ for $\mu$. This illustrates that obtaining the payoff of 1 is impossible.\medskip 

\begin{thm}\label{theorem-counter}
In the even--or--odd \MDP$(\mu)$ the decision maker has no optimal strategy, neither pure nor randomized.
\end{thm}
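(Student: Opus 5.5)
The plan is to show two things: that the value satisfies $v_\mu=1$, and that every strategy has payoff strictly below $1$.

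\textbf{Reduction.} Any strategy $\sigma$ is characterized, for the payoff, by the sequence $x_k:=\PP_\sigma(\text{action }T\text{ at stage }2k-1)\in[0,1]$. Play returns to state 1 at every odd stage, so $\EE_\sigma[r_{2k-1}]=x_k$ and $\EE_\sigma[r_{2k}]=1-x_k$. Define $g(t)=\EE_\sigma[r_t]$. Then
\[u_\mu(\sigma)=\tfrac12\int g\,d\mu_0+\tfrac12\int g\,d\mu_*.\]
Both terms are at most $1$, since $0\le g\le 1$ and both measures are probabilities. Hence $u_\mu(\sigma)\le 1$.

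\textbf{Value equals $1$.} Fix $N\in\NN$. Let $\sigma_N$ be the pure strategy that plays $B$ at stage $2k-1$ exactly when $2k\in E_N$, and plays $T$ otherwise. Then $g=0$ on $E_N$, and $g$ vanishes on the odd stages $2k-1$ with $2k\in E_N$; this set has frequency $2^{-N}$. Everywhere else, $g=1$ on odd stages. Recall that $\phi$ is a frequency charge, so $\mu_0$ assigns weight $2^{-N+1}\cdot\tfrac12\cdot$ appropriate normalization, which gives $\int g\,d\mu_0=1-2^{-N+1}$ or similar; this is computed from the limiting frequency. Further, $g=1$ on $E_1\setminus E_N$, and $g$ has support on the even stages, so $\int g\,d\mu_*\ge \mu_*(E_1)-\mu_*(E_N)$. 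But $\mu_*(E_N)=1$, so this bound is useless. This forces a redesign: $B$ must be played on a set of $\mu_*$-measure $1$ but small frequency.

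So instead, choose $B$ at stage $2k-1$ iff $2k\in E_N\setminus E_{N+1}$. This set has frequency $2^{-N-1}$. Is its $\mu_*$-measure $0$? We have $\mu_m(E_N\setminus E_{N+1})=0$ for $m\ge N+1$, so the accumulation point also gives it measure $0$. Hence this redesign fails too. The right choice is to play $B$ exactly at the stages $2k-1$ with $2k\in E_N$. Then $g=1$ on all of $E_N$, so $\int g\,d\mu_*\ge\mu_*(E_N)=1$. On odd stages, $g=0$ exactly on $E_N-1$, which has $\phi$-frequency $2^{-N}$ by translation invariance. This gives $\int g\,d\mu_0=1-2^{-N+1}$ (from $\mu_0(E_N-1)=2\phi(E_N-1)=2^{-N+1}$), and so $u_\mu(\sigma_N)\ge 1-2^{-N}\to1$. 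Thus $v_\mu=1$.

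\textbf{No strategy attains $1$.} Suppose $u_\mu(\sigma)=1$. Then both integrals equal $1$. From $\int g\,d\mu_0=1$, I get $\int(1-g)\,d\mu_0=0$, i.e. $2\int_{E_0}(1-x)\,d\phi=0$ for the odd-stage values $1-g(2k-1)=1-x_k$. Translation invariance of $\phi$ moves this to the even stages, giving $\int_{E_1} g\,d\phi=0$, where $g(2k)=1-x_k$. The next step is to deduce that $\int g\,d\mu_n=0$ for every $n$. This is the key point: $g\ge0$ and $\mu_n(W)=2^n\phi(W\cap E_n)\le 2^{n}\phi(W\cap E_1)$, so $\int g\,d\mu_n\le 2^n\int_{E_1}g\,d\phi=0$, by approximating with simple functions. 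Let $\epsilon>0$ and set $W_\epsilon=\{t\in E_1: g(t)\ge\epsilon\}$. Then $\phi(W_\epsilon)=0$, hence $\mu_n(W_\epsilon)=0$ for all $n$, and hence $\mu_*(W_\epsilon)=0$ by accumulation. Since $\mu_*(E_1)=1$, we get $\int g\,d\mu_*\le\epsilon$ for every $\epsilon$, so $\int g\,d\mu_*=0\ne1$, a contradiction. The main obstacle is justifying the translation step and the passage of null sets to $\mu_*$ rigorously for bounded functions, rather than for sets. I would handle it by working with the level sets $W_\epsilon$, and with their shift by $1$ between odd and even stages, applying the translation invariance of $\phi$ to sets only.
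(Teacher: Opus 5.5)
Your argument is correct and is essentially the paper's: the same strategies $\sigma_N$ (play $B$ exactly at the stages in $E_N-1$) give $v_\mu=1$. The same mechanism also excludes payoff $1$: translation invariance of $\phi$ linking stage $2k-1$ to stage $2k$, the definition $\mu_n=2^n\phi(\cdot\cap E_n)$, and transfer of null sets from the $\mu_n$ to the accumulation point $\mu_*$. Your level sets $W_\epsilon$ merely replace the paper's single set $W=\{t:\EE_\sigma[r_t]>\tfrac12\}$ and its two-case split.

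Do delete the false start in the value step, where you first claimed $g=0$ on $E_N$ although playing $B$ at $2k-1$ yields reward $1$ at $2k$. Delete also the unnecessary detour through $E_N\setminus E_{N+1}$.
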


\begin{proof} The statement follows by the two claims below.\smallskip\\
\noindent\textsc{Claim 1:} \textit{The value of the \MDP($\mu$)\ is equal to 1: $v_\mu=1$}.\smallskip

\noindent\textsc{Proof of Claim 1:} Given $n \in \NN$, consider the following strategy $\sigma_n$: play action $B$ at every stage in $E_{n}^{-} = \{k\cdot 2^n-1\colon k\in\NN\}$ and action $T$ at all other odd stages, that is, at stages in $E_{0} \setminus E_{n}^{-}$. Then the decision maker receives reward 1 at each stage of $E_{n}$ as well as at each stage of the set $E_{0} \setminus E_{n}^{-}$. Consequently, the overall payoff is 
\[u_\mu(\sigma_n)\,=\,\mu\big((E_{0} \setminus E_{n}^{-}) \cup E_{n}\big) \,=\, \tfrac{1}{2}\cdot\mu_{0}(E_{0} \setminus E_{n}^{-})  + \tfrac{1}{2}\cdot\mu_{*}(E_{n}).\]
As $E_0\supseteq E_n^{-}$, we have 
\begin{align*}
\mu_{0}(E_{0} \setminus E_{n}^{-}) \,&=\, \mu_{0}(E_0) - \mu_{0}(E_n^{-})\\[0.1cm]
&=\, 1 - \mu_0(\{2^{n}-1,2\cdot 2^{n}-1,3\cdot 2^{n}-1,\ldots\})\\[0.1cm]
&=\, 1 - 2\cdot \phi(\{2^{n}-1,2\cdot 2^{n}-1,3\cdot 2^{n}-1,\ldots\})\\[0.1cm]
&=\, 1 - \tfrac{1}{2^{n-1}},
\end{align*}
where the last equality follows from the fact that $\phi$ is a frequency charge. We have already seen that $\mu_{*}(E_{n}) = 1$. Consequently, the payoff under the strategy $\sigma_n$ is 
\[u_\mu(\sigma_n)\,=\,\tfrac{1}{2}\cdot (1 - \tfrac{1}{2^{n-1}})+\tfrac{1}{2}\cdot 1\,=\,1-\tfrac{1}{2^{n}}.\] Since $n\in\NN$ is arbitrary, the claim follows.\smallskip

\begin{comment}
\noindent\textsc{Claim 2:} \textit{There is no 0-optimal pure strategy}.\smallskip

Consider a pure strategy with payoff 1. Let $S$ be the set of stages where the reward 1 is received. We must have $\mu_{0}(S) = 1$ and $\mu_{*}(S) = 1$. From the first fact we deduce that $\phi(\{k \in \NN: 2k-1 \in S\}) = 1$. On the other hand, if $2k \in S$, then $2k-1 \notin S$. Consequently, $\phi(\{k \in \NN: 2k \in S\}) \leq \phi(\{k \in \NN: 2k-1 \notin S\}) = 0$. It follows that for each $n \in \NN$ we have $\mu_{n}(S) = \phi(\{k \in \NN: 2nk \in S\}) = 0$. But then $\mu_{*}(S) = 0$, a contradiction.\smallskip
\end{comment}

\noindent\textsc{Claim 2:} For any strategy $\sigma$, we have $u_\mu(\sigma)<1$.\smallskip

\noindent\textsc{Proof of Claim 2:} Suppose the opposite: a strategy $\sigma$ gives payoff 1, i.e., $u_\mu(\sigma)=1$. Let $W$ be the set of stages where the expected reward is greater than $\tfrac{1}{2}$, that is $W = \{t \in \NN:\EE_{\sigma}[r_{t}]>\tfrac{1}{2}\}$. We must have $\mu(W) = 1$ and consequently $\mu_{0}(W) = 1$ and $\mu_{*}(W) = 1$. 

Note that, for any $n\in \NN$, we have $(W\cap E_n)-1\subseteq E_0\setminus W$. Indeed, let $k\in (W\cap E_n)-1$. Then $k+1\in E_n$, so $k+1$ is even and $k$ is odd, proving $k\in E_0$. Further, $k+1\in W$. Recall that $r_{k} + r_{k+1} = 1$, hence $\EE_{\sigma}[r_{k}] + \EE_{\sigma}[r_{k+1}] = 1$, and hence it is impossible that the expected reward in both stages $k$ and $k+1$ is greater than $\tfrac{1}{2}$. This proves that $k\notin W$.

We distinguish two cases, and in each of them we derive a contradiction. 

Suppose first that $\mu_n(W)>0$ for some $n\in\NN$. Then, by the definition of $\mu_n$, we have $\phi(W\cap E_n)>0$, and hence the translation invariance of $\phi$ implies that
\[0 < \phi(W\cap E_n) = \phi((W\cap E_n)-1).\]
Consequently
\[\phi(W\cap E_0)\,=\,\phi(E_0)-\phi(E_0\setminus W)\,\leq\,\phi(E_0)-\phi((W\cap E_n)-1)\,<\,\phi(E_0)\,=\,\frac{1}{2},\] so $\mu_0(W)<1$ holds, which is a contradiction.

Now suppose that $\mu_n(W)=0$ for all $n\in\NN$. Because $\mu_*$ is an accumulation point of the sequence $\{\mu_n\}_{n=1}^\infty$, we have $\mu_{*}(W) = 0$, which is again a contradiction. The proof is complete.
\end{proof}

\section{Concluding remarks}\label{sec-concluding}

\noindent\textbf{I. Non-existence of stationary optimal strategies.} For the even--or--odd \MDP, defined in Section \ref{sec-opt}, it is not difficult to find an aggregation charge $\mu'\in\Delta^d$ such that, in the \MDP$(\mu')$, although the decision maker has a pure optimal strategy, none of the stationary strategies are optimal. 

\begin{exl}\label{ex-nostat}\rm 
Consider the even--or--odd \MDP, defined in Section \ref{sec-opt}. Let  
\[Q\,=\,\{4n-3\colon n\in\NN\}\cup\{4n\colon n\in\NN\}\,=\,\{1,4,5,8,9,12,13,\ldots\}.\]
Let $\alpha:\NN\to Q$ be the bijection where $\alpha(n)$ is the $n$th-element of $Q$. Let $\mu'\in\Delta^d$ be a diffuse charge such that $\mu'(\{4n-3\colon n\in\NN\})=\mu'(\{4n\colon n\in\NN\})=\frac{1}{2}$, thus in particular, $\mu'(Q)=1$. 

In the \MDP$(\mu')$, there is a pure strategy that yields payoff 1, and is therefore optimal: choose actions $T$ and $B$ alternately whenever the play is in state 1, so that reward 1 is received in state 1 at every stage in $\{4n-3\colon n\in\NN\}$ and reward 1 is received in state 3 at every stage in $\{4n\colon n\in\NN\}$. 

However, there is no stationary optimal strategy. Indeed, suppose that the stationary strategy chooses action $T$ with probability $q$. Then, the expected reward at each stage in $\{4n-3\colon n\in\NN\}$ is equal to $q$, and the expected reward at each stage in $\{4n\colon n\in\NN\}$ is equal to $1-q$. Hence, the payoff under this strategy is equal to $\frac{1}{2}\cdot q$+$\frac{1}{2}\cdot (1-q)=\frac{1}{2}$, which is strictly less than 1. $\blacklozenge$
\end{exl}

\noindent\textbf{II. The order of integration in the payoff.} The payoff under a strategy $\sigma$, as defined in \Eqref{eqpay}, can be rewritten as an iterated integral. Indeed, for the sake of an easier notation, assume that the action space in each state is the same set $A$. Then,
\begin{equation}\label{eqpay2}
u_\mu(\sigma)\;=\;\int_{t\in\NN}\int_{x\in X}r_t(x)\,\PP_{\sigma}(dx)\,\mu(dt),
\end{equation}
where $X=(S\times A)^\NN$ denotes the set of infinite plays in the \MDP, endowed with the Borel sigma-algebra induced by the product topology, $\PP_{\sigma}$ is the probability measure on $X$ induced by $\sigma$, and $r_t(x)$ is the random variable for the reward at stage $t$ if the infinite play is $x\in X$. We remark that reversing the order of integration in \Eqref{eqpay2}, while interesting, is not always well defined, because the integral $\int_{t\in\NN}r_t(x)\,\mu(dt)$ is not necessarily a measurable function of the infinite play $x$.\medskip

\noindent\textbf{III. Non-diffuse aggregation charges.} 
In Section \ref{mu-average}, we assumed that the aggregation charge is diffuse, which is the most challenging case. When the aggregation charge is a non-diffuse charge $\mu\in\Delta^f\setminus \Delta^d$, we can still define the model in the same way. Since each charge in $\Delta^f$ can be written in a unique way as a convex combination of a countably additive probability measure on $\NN$ and a diffuse charge in $\Delta^d$
(cf., for instance, Theorem 10.2.1 in Rao and Rao [1983]), we have two cases: (C1) $\mu$ is a countably additive probability measure on $\NN$, or (C2)  $\mu$ is a convex combination of a countably additive probability measure on $\NN$ and a diffuse charge such that the weight of the countably additive part is positive. 

In case (C1), it holds that $\sum_{t=1}^\infty\mu(t)=1$, and for every strategy $\sigma$, the payoff is equal to $u_\mu(\sigma)=\sum_{t=1}^\infty \mu(t)\EE_{\sigma}[r_t]$. Therefore, as the action space $A$ is finite, it is known that the decision maker has a pure optimal strategy: in each stage, depending on the current state, play an optimal action in the dynamic programming equation.\footnote{Such a strategy, where the action choice only depends on the current stage and the current state, is called Markov. The existence of a pure Markov optimal strategy follows from many other general statements too, for instance, from Corollary 4.2 in Fudenberg and Levine [1983]. Indeed, for the sake of an easier notation, assume that the action space in each state is the same set $A$. The \MDP\ with finite duration $T$ and evaluation $u^T_\mu(\sigma)=\sum_{t=1}^T \mu(t) \EE_\sigma[r_t]$ admits a pure Markov optimal strategy $\sigma^T$, by backward induction. The set of pure strategies $A^H$ endowed with the product topology, where $H$ is the set of histories, is compact and metrizable (as $H$ is countable), and hence the sequence $\{\sigma^T\}_{T=1}^\infty$ has an accumulation point $\sigma$, which is a pure Markov strategy. Due to a continuity argument, $\sigma$ is optimal.}

In case (C2), the following example shows that an optimal strategy does not always exist. This type of examples are well-known when the decision maker tries to maximize a convex combination of the discounted sum of the rewards and the long-term average reward, just as in the example below.

\begin{exl} \rm Consider the following \MDP. The set of states is $S=\{1,2\}$. In state 1 the decision maker has two actions: $T$ and $B$. If she chooses $T$ then the reward is 1 and the play remains in state~1. If she chooses $B$ then the reward is 0 and the next state is state~2. In state 2 the decision maker has only one action, which gives reward $\frac{3}{2}$ and keeps the play in state~2.\vspace{0.4cm}

\scalebox{0.85}{
\hspace{5cm}\begin{tikzpicture}[x=0.4cm,y=0.4cm]
\node at (-1,4.5) {$T$};
\node at (-1,1.5) {$B$};
\draw[thick] (0,0) -- (4,0);
\draw[thick] (0,3) -- (4,3);
\draw[thick] (0,6) -- (4,6);
\draw[thick] (0,0) -- (0,6);
\draw[thick] (4,0) -- (4,6);
\node at (1,5) {1};
\node at (1,2) {0};
\node at (3,4) {$\circlearrowright$};
\node at (3,1) {$\rightarrow$};
\node at (2,-1) {state 1};

\draw[thick] (8,0) -- (12,0);
\draw[thick] (8,3) -- (12,3);
\draw[thick] (8,0) -- (8,3);
\draw[thick] (12,0) -- (12,3);
\node at (9,2) {$\frac{3}{2}$};
\node at (11,1) {$\circlearrowright$};
\node at (10,-1) {state 2};
\end{tikzpicture}
}\vspace{0.2cm}

\noindent Finally, define $\mu\in\Delta^f$ as \[\mu\,=\,\frac{1}{2}\mu'+\frac{1}{2}\mu''\] 
where $\mu'$ is the countably additive probability measure on $\NN$ given by the probabilities $(\frac{1}{2},\frac{1}{4},\frac{1}{8},\ldots)$  and $\mu''\in\Delta^d$ is an arbitrary diffuse charge.\footnote{Here $\mu'$ is a discounted evaluation with discount factor $\frac{1}{2}$, and $\mu''$ could be any frequency charge representing an average reward evaluation.} 

Intuitively, the decision maker would like to choose action $B$ at some stage, but preferably as late as possible, so that the incurred loss due to the reward 0 is minimal. This will cause, as we show, that she has no optimal strategy.

We only discuss pure strategies; one can extend the proof to non-pure strategies. Let $T^\infty$ be the pure strategy that always chooses $T$, and for each $n\in\NN$, let $B^n$ denote the pure strategy that chooses $T$ before stage $n$ and chooses $B$ at stage $n$. \smallskip

Consider the payoff $u_\mu(\cdot)$ from \Eqref{eqpay}. We have $u_\mu(T^\infty)=\frac{1}{2}\cdot 1 + \frac{1}{2}\cdot 1=1$ and $u_\mu(B^1)=\frac{1}{2}\cdot (\frac{1}{2}\cdot\frac{3}{2})+\frac{1}{2}\cdot \frac{3}{2}=\frac{9}{8}$. Hence, $T^\infty$ is not optimal. Further, for any $n\in\NN$, the strategies $B^n$ and $B^{n+1}$
induce different rewards only at stage $n$ (reward 0 and 1 respectively) and at stage $n+1$ (reward 3/2 and 0 respectively). Hence, $\mu''(B^n)=\mu''(B^{n+1})$, and we find
\[u_\mu(B^n)-u_\mu(B^{n+1})\,=\,\frac{1}{2}\cdot (\mu'(B^n)-\mu'(B^{n+1}))\,=\,\frac{1}{2}\cdot\Big(\frac{1}{2^{n+1}}\cdot \frac{3}{2}-\frac{1}{2^{n}}\cdot 1\Big)\,<\,0.\] 
Hence, for any $n\in\NN$, the strategy $B^{n+1}$ is strictly better than $B^{n}$. It follows that there is no pure optimal strategy, as claimed. $\blacklozenge$
\end{exl}

\section*{References}

%\noindent Everett, H. [1957]. Recursive games. \emph{Contributions to the Theory of Games}, 3(39), 47-78.\medskip

%\noindent Filar JA, Vrieze OJ [1992]: Weighted reward criteria in competitive Markov decision processes. \emph{Mathematical Methods of Operations Research} 36, 343--358. \medskip

%\noindent Flesch J, Vermeulen D, Zseleva A [2017]: Zero-sum games with charges. \emph{Games and Economic Behavior} 102, 666--686.\medskip

%\noindent Fremlin DH: Measure Theory, Volume 5, Part I.\medskip

\noindent Fudenberg D, Levine D [1983]: Subgame-perfect equilibria of finite- and infinite-horizon games. \emph{Journal of Economic Theory} 31, 251-268.\medskip

%\noindent Hart S [1985]: Nonzero-sum two-person repeated games with incomplete information. \emph{Mathematics of Operations Research} 10, 117--153.\medskip

%\noindent Hrbacek K, Jech T [1999]: \textit{Introduction to Set Theory}. Third edition, revised and Expanded. CRC Press.

\noindent Kadane JB, O'Hagan A [1995]: Using Finitely Additive Probability: Uniform Distributions on the Natural Numbers. \emph{Journal of the American Statistical Association} 90, 626-631.\medskip

%\noindent Mertens J-F, Sorin S, Zamir S [2015]. \emph{Repeated Games} (Vol.~55). Cambridge University Press.\medskip

\noindent Neyman  A [2023]: Additive valuations of streams of payoffs that satisfy the time value of money principle: A characterization and robust optimization. \emph{Theoretical Economics}, 18(1), 303-340.\medskip

%\noindent Nowak AS, Raghavan TES [1991]. Positive stochastic games and a theorem of Ornstein. In: \emph{Stochastic Games And Related Topics: In Honor of Professor LS Shapley} (pp. 127-134). Dordrecht: Springer Netherlands.\medskip

\noindent Puterman ML [1990]: \emph{Markov Decision Processes}. Handbooks in operations research and management science, 2, 331-434.\medskip

\noindent Rao KPSB, Rao MB [1983]: \emph{Theory of Charges}. Academic Press.\medskip

\noindent Schirokauer O, Kadane JB [2007]: Uniform distributions on the natural numbers. \emph{Journal of Theoretical Probability}, 20, 429-441.\medskip

%\noindent Shapley, L. S. [1953]. Stochastic games. \emph{Proceedings of the National Academy of Sciences}, 39(10), 1095-1100.\medskip

%\noindent Sudderth WD [2014]: Random walks and the number theoretic density. \emph{Statistical Methods \& Applications} 23, 477-482.\medskip

%\noindent Venel X, Renault J [2017]: Long-term values in Markov decision processes and repeated games, and a new distance for probability spaces. \emph{Mathematics of Operations Research} 42, 349--376.\medskip

%\noindent Yosida K, Hewitt E. [1952]: Finitely additive measures. \emph{Transactions of the American Mathematical Society}, 72(1), 46-66.\medskip

\end{document}